\newtheorem{theorem}{Theorem}[section]
\newtheorem{corollary}[theorem]{Corollary}
\theoremstyle{definition}
\numberwithin{equation}{section}
\renewcommand{\mod}[1]{\hspace{.05in} (\textrm{mod  } {#1})}
\newcommand{\qm}{\mathbb{Q}(\sqrt{m})}
\begin{document}

\title[]{Real quadratic fields in which every non-maximal order has relative ideal class number greater than one}

\author[Amanda {\sc Furness}]{{\sc Amanda} FURNESS}
\address{Amanda {\sc Furness}\\
Department of Mathematics,\\
Indiana University \\
Bloomington, IN 47405, USA}
\email{afurness@indiana.edu}

\author[Adam E. {\sc Parker}]{{\sc Adam E.} PARKER}
\address{Adam E. {\sc Parker}\\
Department of Mathematics and Computer Science\\
Wittenberg University, P.O. Box 720\\
Springfield, OH 45501, USA}
\email{aparker@wittenberg.edu}
\urladdr{http://userpages.wittenberg.edu/aparker}

\maketitle

\begin{abstract}
Cohn asked if for every real quadratic field $K=\qm$, with $m$ being the squarefree part of the field discriminant $d_0$ of $K$,  there exists a non-maximal order corresponding  to $f>1$ such that the relative ideal class number $h_{d_0}(f)=h(d_0 f^2)/h(d_0)$ is one.   We prove that for $\mathbb{Q}(\sqrt{46})$ there is no such order.  We also prove the analogous result for relative strict ideal class numbers and relative quadratic form class numbers.
\end{abstract}

\bigskip
\section{Introduction}

In his 1801 \emph{Disquisitiones arithmeticae} \cite{gauss}, Gauss established the theory of integral binary quadratic forms $Q(x,y) = a x^2 + b xy + cy^2$ under an equivalence  relation he called proper equivalence.  (This is equivalence under transformations of variables in $SL(2, \mathbb{Z}).$)  Gauss developed his theory only for quadratic forms of even middle coefficient $b= 2b'$ and he listed quadratic forms by values of their determinant $D=(b')^2 - a c.$  In this paper, it will be convenient to use the discriminant $d=4D = b^2 - 4 a c$, rather than the determinant $D$.  A discriminant $d$ is called a \emph{fundamental discriminant} or a \emph{field discriminant} if it is not divisible by another discriminant with the quotient being a perfect square.  Thus an arbitrary discriminant can be written $d= d_0 f^2$ where $d_0$ is a field discriminant.  The extension of Gauss's theory to allow an odd middle coefficient was done later.  Gauss called a quadratic form $Q(x,y)$ properly primitive if {\sc gcd}$( a,b,c)=1$.  He developed a theory of {composition of quadratic forms} that puts a group structure on the set of equivalence classes of properly primitive quadratic forms of a given discriminant $d$.    We let $H(d)$ denote the number of such equivalence classes and call $H(d)$ the \emph{quadratic form class number}.  Gauss also divided classes of quadratic forms into genera, whose number is always a power of $2$, depending on the prime factorization of $D$, which implies that $H(d)=H(4D)$ is divisible by this power of $2$.  Gauss computed tables of the number of quadratic form classes for different $D$, and for definite quadratic forms (namely $D<0$) he observed that the number of $D$ with one class per genus appears to be bounded, with the largest value found being $D=-1848.$  For indefinite quadratic forms (namely $D>0$) he observed (in Article 304) that there are many positive non-square $D$ which have one class per genus, and he commented that one could not doubt there are infinitely many such positive $D$.  

In 1858 Dirichlet \cite{dirichlet} addressed the comment of Gauss about indefinite quadratic forms.  He introduced the notion of \emph{relative quadratic form class number} $H_{d_0}(f),$ defined by
\[
H_{d_0}(f) := \frac{H(d_0 f^2)}{H(d_0)},
\]
and noted that $H_{d_0} (f)$ is an integer.  He gave a formula for it, and proved that there exist field discriminants $d_0$ for which there are infinitely many $f$ such that the relative quadratic form class number is $1$.  He also stated that one can furthermore find such values $d_0$ whose quadratic forms have one class per genus, and concluded by varying $f$  that there do exist infinitely many positive non-square $d$ for which the set of properly primitive quadratic forms of discriminant $d$ have one class per genus.  Thus he rigorously justified Gauss's comment.  Dirichlet did not give an explicit construction of a particular $d_0$.  In 1962 H. Cohn \cite[p. 130]{cohn1} computed some explicit cases, and in particular his results imply that for $d_0=5$ and $f=5^n$ for $n \geq 1$, one has $H(5)=1, H_5(5^n) =1$ and so $H(5^{2n+1})=1$.

It is natural to ask for which $d_0$ does Dirichlet's conclusion hold:  For which $d_0$ do there exist infinitely many $f$ with relative quadratic form class number $H_{d_0}(f) = 1$?  In 1962, Cohn \cite[p. 219]{cohn2} raised a closely related question concerning relative ideal class numbers $h_{d_0}(f)$ of orders in the real quadratic field $\mathbb{Q}(\sqrt{d_0})$ (defined below).  Cohn said that it is not known whether for each positive fundamental discriminant $d_0$ there always exists some $f>1$ such that the relative ideal class number $h_{d_0}(f)=1$.  This question was incorrectly attributed to Dirichlet in \cite{fp}.  The object of this paper is to answer Cohn's question, showing that there are some integers $d_0$ for which this cannot be done.  We shall simultaneously show that for the same $d_0$ the relative quadratic form class number $H_{d_0}(f)>1$ for all $f>1$.

We make use of the relation of classes of quadratic forms to strict invertible ideal classes in an order $\mathcal{O}_f$ of the quadratic field $\mathbb{Q}(\sqrt{d_0})$, which we review in Section 2.  For the quadratic field $\mathbb{Q}(\sqrt{d_0})$ and $f \geq 1$, the order $\mathcal{O}_f = \mathbb{Z}[1, f(\frac{d_0+\sqrt{d_0}}{2})]$ is a subring of the maximal order $\mathcal{O}_1$ and is a Dedekind domain if and only if $f=1$.  We let $h(d_0 f^2)$ denote the size of its group of invertible ideal classes under equivalence of ideals by principal ideals, and $h_+(d_0 f^2)$ be the order of its group under strict equivalence of ideals, which uses principal ideals of totally positive elements.  Here $h_+(d_0 f^2) = e h(d_0 f^2)$ where $e=1$ or $2$.  Parallel to relative quadratic form class numbers, we define the \emph{relative ideal class number}
\[
h_{d_0}(f) :=  \frac{h(d_0 f^2)}{h(d_0)}
\]
 and the \emph{relative strict ideal class number}
\[
h^+_{d_0} (f) := \frac{h_+(d_0 f^2)}{h_+(d_0)}.
\]

Our result will answer Cohn's question for all three of these quantities.

\begin{theorem}\label{1.1}
For each $f>1$, the three relative class numbers described above satisfy
\[
h_{184}(f)  = \frac{h(184f^2)}{h(184)} = \frac{h_+(184 f^2)}{h_+(184)}=\frac{H(184 f^2)}{H(184)} > 1.
\]
\end{theorem}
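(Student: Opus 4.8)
The plan is to use the classical formula for the relative ideal class number,
\[
h_{d_0}(f) = \frac{f\prod_{p\mid f}\bigl(1 - \bigl(\tfrac{d_0}{p}\bigr)\tfrac1p\bigr)}{[\mathcal{O}_1^*:\mathcal{O}_f^*]},
\]
where $\bigl(\tfrac{d_0}{p}\bigr)$ is the Kronecker symbol. Write $\psi(f)$ for the numerator and $u(f)=[\mathcal{O}_1^*:\mathcal{O}_f^*]$ for the unit index. First I would record two structural facts: $\psi$ is multiplicative with $\psi(p^k)=p^{k-1}\bigl(p-(\tfrac{d_0}{p})\bigr)$, and, writing $\varepsilon^n=A_n+B_n\sqrt{46}$ for powers of the fundamental unit $\varepsilon$ of $\mathcal{O}_1=\mathbb{Z}[\sqrt{46}]$, one has $\varepsilon^n\in\mathcal{O}_f$ if and only if $f\mid B_n$, so that $u(f)=\operatorname{lcm}_{p^k\|f}u(p^k)$. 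Since an lcm never exceeds the corresponding product, and a lifting-the-exponent computation gives $h_{d_0}(p^k)\ge h_{d_0}(p)$, these combine to yield $h_{d_0}(f)\ge\prod_{p^k\|f}h_{d_0}(p)$. Hence it suffices to prove $h_{184}(p)\ge 2$ for every prime $p$.

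Because $h_{184}(p)=\psi(p)/u(p)$ is a positive integer, proving $h_{184}(p)\ge 2$ amounts to showing that the image $\bar\varepsilon$ of $\varepsilon$ does not generate the cyclic group $G_p:=(\mathcal{O}_1/p\mathcal{O}_1)^*/(\mathbb{Z}/p\mathbb{Z})^*$, whose order is exactly $\psi(p)$. The key idea is the identity $\operatorname{tr}(\varepsilon)-2=4\cdot 23^3$, which I would verify from $\varepsilon=24335+3588\sqrt{46}$ and $N(\varepsilon)=1$. Setting $\gamma:=(\varepsilon-1)/2=23^3+1794\sqrt{46}\in\mathcal{O}_1$, this identity rearranges to $\gamma^2=23^3\varepsilon$, with $N(\gamma)=-23^3$. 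For any prime $p\ne 23$ the rational factor $23^3$ is trivial in $G_p$ and $\gamma$ is invertible mod $p$, so $\bar\varepsilon=\bar\gamma^{2}$ is a square in $G_p$; as $G_p$ is cyclic of even order $p-(\tfrac{d_0}{p})$ for every odd split or inert $p$, a square cannot be a generator, whence $u(p)\le\psi(p)/2$ and $h_{184}(p)\ge 2$.

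It remains to treat the two primes excluded by the relation, and here direct computation finishes the job: both are ramified, and since $3588=2^2\cdot 3\cdot 13\cdot 23$ is divisible by $2$ and by $23$, we have $\varepsilon\in\mathcal{O}_2\cap\mathcal{O}_{23}$, i.e.\ $u(2)=u(23)=1$. With $\psi(2)=2$ and $\psi(23)=23$ this gives $h_{184}(2)=2$ and $h_{184}(23)=23$, both exceeding $1$. Combining the three cases yields $h_{184}(p)\ge 2$ for all $p$, and therefore $h_{184}(f)>1$ for all $f>1$. To obtain the stated equalities with the strict and form class numbers, I would note that $N(\varepsilon)=1$ forces the fundamental unit $\varepsilon^{u(f)}$ of every $\mathcal{O}_f$ to have norm $1$, so the index $e$ relating strict and ordinary class numbers equals $2$ for every $f$ (including $f=1$); the factors of $2$ cancel in the ratio, giving $h^+_{184}(f)=h_{184}(f)$, and the form--ideal correspondence reviewed in Section 2 gives $H_{184}(f)=h^+_{184}(f)$.

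I expect the unit identity $\operatorname{tr}(\varepsilon)-2=4\cdot 23^3$, and its consequence that $\varepsilon$ is a global square away from the single prime $23$, to be the true heart of the matter: everything else is bookkeeping, but without this special arithmetic of the discriminant $184$ there would be no uniform obstruction forcing $h_{184}(p)>1$ for all $p$ simultaneously.
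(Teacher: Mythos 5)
Your proof is correct, and while it follows the paper's overall skeleton (reduce to prime $f$, show $u(p)\le\psi(p)/2$ at unramified primes, dispatch $p=2$ and $p=23$ from $46\mid 3588$, then transfer to $h^+_{184}$ and $H_{184}$ via $N(\varepsilon)=1$ and Theorems \ref{2.1}--\ref{2.2}), your key mechanism is genuinely different. The paper works in the group of solutions of $x^2-46y^2\equiv 1\pmod{f}$, cyclic of order $\psi(f)$ by a result of Menezes, and shows by a matrix computation that the $\psi(f)/2$ power of $\varepsilon_{46}$ is diagonal mod $f$, hence lies in $\mathcal{O}_f$; you instead work in $G_p=(\mathcal{O}_1/p\mathcal{O}_1)^*/(\mathbb{Z}/p\mathbb{Z})^*$ and exhibit $\bar\varepsilon$ as a square via the identity $\gamma^2=23^3\varepsilon$ with $\gamma=(\varepsilon-1)/2$. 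These are two faces of one index-two obstruction: in the cyclic group $G_p$ the squares are exactly the image of the norm-one elements, so your identity is an explicit global witness of what the paper extracts from $N(\varepsilon)=1$ alone. For that reason your closing assessment that the identity is ``the true heart of the matter'' is overstated: $(\varepsilon-1)^2=(\operatorname{tr}(\varepsilon)-2)\,\varepsilon$ holds for \emph{every} norm-one unit, so the genuine inputs are $N(\varepsilon_{46})=1$ together with $46\mid 3588$ --- which is why the paper's version adapts, as its final remarks assert, to the other seven fields with $m\mid y$, whereas your route also needs the lucky fact that $\operatorname{tr}(\varepsilon)-2=4\cdot 23^3$ is supported only on ramified primes; an unramified prime dividing $\operatorname{tr}(\varepsilon)-2$ would escape your square argument and force you back to the norm-one argument. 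Two points to tighten: the lifting-the-exponent claim $u(p^k)\mid p^{k-1}u(p)$ is asserted rather than proved (it is standard, but $p=2$ deserves the remark that $B_{2n}=2A_nB_n$ with $A_n$ odd), though the paper's corresponding step --- citing $h_{d_0}(f)\mid h_{d_0}(g)$ for $f\mid g$ --- is at a comparable level of rigor; and your square argument needs $p$ odd (which you do acknowledge by restricting to odd split or inert $p$), since $\psi(p)$ even fails only at the ramified primes you treat directly. What your approach buys is a clean, checkable algebraic identity in place of the matrix manipulation and the citation to Menezes; what the paper's buys is robustness, since it uses nothing about $\varepsilon_{46}$ beyond its norm and the divisibility $46\mid 3588$.
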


\section{ Background }
In this section we give background on the three types of class numbers ($H(d)$, $h_+(d)$ and $h(d)$) described in the introduction, as well as connections between them.  All results are taken from \cite{cohn2}.

Most well known is the case that $d=d_0$ is a field discriminant.  In this case there is a one-to-one correspondence between properly primitive quadratic form classes and ideal classes of the quadratic field $\mathbb{Q}(\sqrt{d_0})$ under \emph{strict} equivalence (also called \emph{narrow} equivalence) of ideals in the full ring of integers $\mathcal{O}_1 = \mathbb{Z}[1,\frac{d_0+\sqrt{d_0}}{2}]$.  Here two ideals $A, B$ are strictly equivalent if $(\lambda)A = (\mu) B$ for two principal ideals $(\lambda)$ and $(\mu)$ with $\frac{\lambda}{\mu}$ a totally positive element (meaning that itself and its algebraic conjugates are positive) or equivalently $N(\lambda \mu) >0$.  $N(\cdot)$ is the norm from the (real) quadratic field $K=\mathbb{Q}(\sqrt{d_0})$ to $\mathbb{Q}$.  This should be compared with \emph{ordinary} (or \emph{wide}) equivalence where there is no requirement of total positivity.  Following Cohn \cite[Chap. XII.3]{cohn2} we denote the number of strict classes by $h_+(d_0)$ which we call the \emph{strict ideal class number}, and we denote the number of ordinary classes by $h(d_0)$ which we call the \emph{ideal class number}.  The correspondence between properly primitive quadratic form classes and strict ideal classes described in  \cite[Chap. XII.6, Theorems 6 and 7]{cohn2}  gives 
\[
H(d_0) = h_+(d_0).
\]
Also note that $h_+(d_0)$ equals either $h(d_0)$ or $2h(d_0)$ as stated in \cite[Chapter XII.3, Theorem 3]{cohn2}.

Less well known is the case when $d=d_0 f^2$ is a non-fundamental discriminant.  The correspondence is now with invertible ideal classes with respect to the non-maximal order $\mathcal{O}_f = \mathbb{Z}[1, f(\frac{d_0+\sqrt{d_0}}{2})]$ of $\mathbb{Q}(\sqrt{d_0})$.  This ring is not a Dedekind domain, but it does contain a group of invertible ideals, which are those ideals having norm relatively prime to $f$.  One defines strict invertible ideal classes in this ring the same way as for the full ring of integers except we require all the ideals $A,B, \mu, \lambda$ to be invertible in $\mathcal{O}_f$.  Similarly for ordinary equivalence.  We define the \emph{strict invertible ideal class number} $h_+(d_0 f^2)$ to be the number of such strict invertible classes and the \emph{invertible ideal class number} $h(d_0 f^2)$ to be the number of ordinary invertible classes.  These definitions are detailed in \cite[Chap. XIII.2]{cohn2}.  Results analogous to those previously stated for fundamental discriminants are still true when we look at non-fundamental discriminants.   

\begin{theorem}\label{2.1} 
For $d=d_0 f^2$ with $d_0$ a field discriminant, the quadratic form class number $H(d)$ equals the strict ideal class number $h_+(d)$.
\end{theorem}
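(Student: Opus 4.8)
The plan is to construct an explicit bijection between the set of properly primitive quadratic form classes of discriminant $d = d_0 f^2$ under proper ($SL(2,\mathbb{Z})$) equivalence and the group of strict invertible ideal classes of the order $\mathcal{O}_f$, mirroring the construction that yields $H(d_0) = h_+(d_0)$ in the fundamental case. To a properly primitive form $Q(x,y) = ax^2 + bxy + cy^2$ with $b^2 - 4ac = d$ I associate the $\mathbb{Z}$-module
\[
\mathfrak{a}_Q = \mathbb{Z}\, a + \mathbb{Z}\, \frac{-b + \sqrt{d}}{2}, \qquad \sqrt{d} = f\sqrt{d_0}.
\]
The first step is to verify that $\mathfrak{a}_Q$ is an ideal of $\mathcal{O}_f$: using $d \equiv b^2 \mod{4a}$ together with $d = d_0 f^2$, one checks that multiplication by the basis element $f\,\frac{d_0 + \sqrt{d_0}}{2}$ of $\mathcal{O}_f$ carries the generators of $\mathfrak{a}_Q$ back into $\mathfrak{a}_Q$, so $\mathfrak{a}_Q$ is stable under $\mathcal{O}_f$. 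A short norm computation gives $N(\mathfrak{a}_Q) = |a|$.

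The second step is to identify exactly which forms give invertible ideals. The key point is that $\mathcal{O}_f$ is precisely the ring of multipliers $\{\,\xi \in \mathbb{Q}(\sqrt{d_0}) : \xi\,\mathfrak{a}_Q \subseteq \mathfrak{a}_Q\,\}$ if and only if $Q$ is properly primitive, i.e. $\gcd(a,b,c)=1$; and a fractional ideal of the order $\mathcal{O}_f$ is invertible if and only if its ring of multipliers is all of $\mathcal{O}_f$ (equivalently its norm is prime to $f$). Thus $Q \mapsto \mathfrak{a}_Q$ sends properly primitive forms to invertible ideals, and conversely every invertible ideal, written in a suitable $\mathbb{Z}$-basis $\mathbb{Z}\,a + \mathbb{Z}\,\frac{-b+\sqrt d}{2}$, recovers a properly primitive form. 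I would record here that each invertible class contains a representative form with $\gcd(a,f)=1$, which makes the norm-coprimality transparent.

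The third step is to match the two equivalence relations. I would show that replacing $Q$ by a properly equivalent form $Q' = Q\circ \gamma$, $\gamma \in SL(2,\mathbb{Z})$, changes $\mathfrak{a}_Q$ only by a principal factor $(\mu)$ with $N(\mu) > 0$, so that $\mathfrak{a}_Q$ and $\mathfrak{a}_{Q'}$ are strictly equivalent; the determinant-one condition on $\gamma$ is exactly what forces total positivity of the multiplier and hence strict (rather than ordinary) equivalence. Running the construction in reverse shows that strictly equivalent invertible ideals come from properly equivalent forms. Since the map also respects Gauss composition of classes, it is a group isomorphism, giving $H(d) = h_+(d)$.

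The main obstacle I anticipate is the bookkeeping forced by the non-maximal order: unlike the Dedekind case, not every $\mathcal{O}_f$-ideal is invertible, so the crux is the precise matching between the arithmetic condition $\gcd(a,b,c)=1$ on forms and invertibility (equivalently, norm prime to the conductor $f$) for ideals. Handling the orientation and sign conventions carefully—so that proper $SL(2,\mathbb{Z})$ equivalence corresponds to \emph{strict} equivalence of ideals and not to ordinary equivalence—is the delicate part; everything else is a direct transcription of the fundamental-discriminant argument into the order $\mathcal{O}_f$.
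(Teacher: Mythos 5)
First, be aware that the paper does not actually prove Theorem \ref{2.1}: its ``proof'' is purely a citation, pointing to Cohn's book, where the case $f=1$ is carried out (Chap.~XII.6, Theorems 6 and 7) and the case $f>1$ is only stated, with details omitted. So your proposal is not a variant of the paper's argument --- it is a reconstruction of the argument the paper (and, for $f>1$, even Cohn) leaves out, namely the classical dictionary between properly primitive forms of discriminant $d_0 f^2$ and strict classes of invertible ideals of the conductor-$f$ order. Your skeleton is the standard and correct one: the map $Q \mapsto \mathfrak{a}_Q = \mathbb{Z}\,a + \mathbb{Z}\,\frac{-b+\sqrt{d}}{2}$, the identification ``$Q$ properly primitive $\Leftrightarrow$ $\mathfrak{a}_Q$ proper $\Leftrightarrow$ $\mathfrak{a}_Q$ invertible,'' and the matching of $SL(2,\mathbb{Z})$-equivalence with equivalence by a multiplier of positive norm, which is exactly the paper's formulation of strict equivalence via $N(\lambda\mu)>0$. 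What your route buys is a self-contained proof where the paper offers only a pointer; what it costs is that you must genuinely supply the two lemmas special to non-maximal quadratic orders (properness $\Leftrightarrow$ invertibility, and existence of class representatives prime to the conductor) that the Dedekind case $f=1$ gets for free.

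Two points need repair before the sketch is a proof. First, your parenthetical ``invertible if and only if its norm is prime to $f$'' is false at the level of individual ideals: $f\mathcal{O}_f$ is principal, hence invertible, and has norm $f^2$. The correct statements are one-directional and class-level: an $\mathcal{O}_f$-ideal of norm prime to $f$ is invertible; invertibility is equivalent to properness (a fact specific to quadratic orders that deserves its own proof); and every invertible class contains an ideal of norm prime to $f$ --- which on the form side is your (correct) remark that each class contains a form with $\gcd(a,f)=1$, since a properly primitive form represents integers prime to $f$. Second, the sign issue you defer is the actual crux in the indefinite case, not mere bookkeeping: the module $\mathbb{Z}\,a+\mathbb{Z}\,\frac{-b+\sqrt{d}}{2}$ does not see the sign of $a$, so the possibly inequivalent forms $(a,b,c)$ and $(-a,b,-c)$ map to the same module unless you either normalize $a>0$ (possible, since an indefinite form represents both signs) or attach an orientation to the basis; only after that normalization does the computation $\mathfrak{a}_{Q\circ\gamma}=(\mu)\,\mathfrak{a}_Q$ with $N(\mu)>0$ yield a well-defined bijection onto strict classes rather than an over- or under-count by the ambiguity between $h(d)$ and $h_+(d)$. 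Note also that $N(\mu)>0$ does not make $\mu$ totally positive --- it makes $\pm\mu$ totally positive --- but that suffices under the paper's definition. With these two repairs, your proposal is precisely the standard proof that the paper's citation gestures at.
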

\begin{proof}
The proof is sketched in Cohn's book \cite{cohn2}.  The case $f=1$ is explicitly treated in Theorems 6 and 7 in Chap. XII.6.    The general case of $f>1$ is stated in Chap. XIII.2, p. 219, but the details are omitted.
\end{proof}

Since $H(d_0 f^2) = h_+( d_0 f^2)$ for $f\geq 1$, we see that the relative quadratic form class number $H_{d_0}(f)$ is equal to the relative strict ideal class number $h^+_{d_0}(f)$.   Because of Theorem \ref{2.3} below, we have a simple way to calculate $h_{d_0}(f)$, and so we want to relate $h^+_{d_0}(f)$ and $h_{d_0}(f)$.  The following theorem will be helpful in showing that connection.

\begin{theorem}\label{2.2}
For a real quadratic field with field discriminant $d_0$, we have
\[
h_+(d_0 f^2)  = \begin{cases} 
h(d_0 f^2) & \mbox{   if   } N(\varepsilon_{d_0 f^2})=-1,\\
2 h(d_0 f^2) & \mbox{   if   } N(\varepsilon_{d_0 f^2})=1,
\end{cases}
\]
where $\varepsilon_{d_0 f^2}$ is the fundamental unit in the order $\mathcal{O}_f$ of $\mathbb{Q}(\sqrt{d_0})$.
\end{theorem}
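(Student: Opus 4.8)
The plan is to compare the two class groups directly through a common subgroup. Write $I$ for the group of invertible ideals of $\mathcal{O}_f$, write $P\subseteq I$ for the subgroup of principal invertible ideals, and write $P^{+}\subseteq P$ for the subgroup of those principal invertible ideals admitting a totally positive generator. By definition $h(d_0f^2)=[I:P]$ and $h_+(d_0f^2)=[I:P^{+}]$, and since $P^{+}\subseteq P\subseteq I$ we immediately get $h_+(d_0 f^2)=[P:P^{+}]\cdot h(d_0 f^2)$. Thus the theorem reduces to showing that the index $[P:P^{+}]$ equals $1$ when $N(\varepsilon_{d_0f^2})=-1$ and equals $2$ when $N(\varepsilon_{d_0f^2})=1$.

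To compute this index I would introduce the signature homomorphism $\sigma\colon \mathbb{Q}(\sqrt{d_0})^{\times}\to\{\pm1\}^{2}$ given by $\sigma(\alpha)=(\operatorname{sgn}\alpha,\operatorname{sgn}\alpha')$, where $\alpha'$ is the conjugate of $\alpha$, so that $\alpha$ is totally positive exactly when $\sigma(\alpha)=(+,+)$. A principal ideal $(\alpha)\in P$ lies in $P^{+}$ precisely when some unit multiple of $\alpha$ is totally positive, that is, when $\sigma(\alpha)\in\sigma(\mathcal{O}_f^{\times})$; hence the rule $(\alpha)\mapsto\sigma(\alpha)\bmod\sigma(\mathcal{O}_f^{\times})$ is a well-defined homomorphism $P\to\{\pm1\}^{2}/\sigma(\mathcal{O}_f^{\times})$ whose kernel is exactly $P^{+}$, so $[P:P^{+}]$ is the order of its image. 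Now $\mathcal{O}_f^{\times}=\{\pm\varepsilon^{n}\}$ with $\varepsilon=\varepsilon_{d_0f^2}>1$, and $\sigma(-1)=(-,-)$; since $\varepsilon>0$, the sign of $\varepsilon'$ is the sign of $N(\varepsilon)=\varepsilon\varepsilon'$. Therefore $\sigma(\varepsilon)=(+,-)$ when $N(\varepsilon)=-1$ and $\sigma(\varepsilon)=(+,+)$ when $N(\varepsilon)=1$. In the first case $\sigma(\mathcal{O}_f^{\times})=\langle(-,-),(+,-)\rangle=\{\pm1\}^{2}$, the quotient is trivial, and $[P:P^{+}]=1$, giving $h_+=h$. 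In the second case $\sigma(\mathcal{O}_f^{\times})=\{(+,+),(-,-)\}$ has index $2$ in $\{\pm1\}^{2}$, so $[P:P^{+}]\in\{1,2\}$.

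It remains to rule out $[P:P^{+}]=1$ when $N(\varepsilon)=1$, and this is the step I expect to be the main obstacle, since it requires producing an actual principal invertible ideal mapping to the nontrivial coset, namely a generator of negative norm. Concretely, I would search for $\alpha\in\mathcal{O}_f$ with $N(\alpha)<0$ and $\gcd(N(\alpha),f)=1$, the latter condition ensuring $(\alpha)\in I$. Writing $\theta=f\tfrac{d_0+\sqrt{d_0}}{2}$ so that $\mathcal{O}_f=\mathbb{Z}[1,\theta]$, one computes $N(x+\theta)=x^{2}+f d_0\,x+f^{2}\tfrac{d_0(d_0-1)}{4}$, a real quadratic in $x$ whose two roots differ by $f\sqrt{d_0}$; hence it takes negative values on an open interval of length $f\sqrt{d_0}>f$. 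Moreover $N(x+\theta)\equiv x^{2}\pmod f$, so an integer $x$ in that interval with $\gcd(x,f)=1$ yields the desired $\alpha$, and such an $x$ exists because an open interval of length greater than $f$ contains a run of at least $f$ consecutive integers, hence a complete residue system modulo $f$ and in particular one residue coprime to $f$. For this $\alpha$ we have $\sigma(\alpha)\in\{(+,-),(-,+)\}$, which represents the nontrivial coset of $\{\pm1\}^{2}/\sigma(\mathcal{O}_f^{\times})$, so the homomorphism above is surjective and $[P:P^{+}]=2$. This completes the case $N(\varepsilon)=1$ and hence the proof.
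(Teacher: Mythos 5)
Your proof is correct, and it is genuinely different from what the paper does: the paper gives no argument at all for Theorem \ref{2.2}, merely citing Cohn --- Theorem 3 of Chap.\ XII.3 for $f=1$ and an omitted sketch in Chap.\ XIII.2 for $f>1$ --- so you have supplied a self-contained proof of a statement the authors leave entirely to the literature. Your route is the standard signature argument, carried out uniformly in $f \geq 1$: the reduction $h_+(d_0f^2) = [P:P^+]\, h(d_0f^2)$, the identification of $[P:P^+]$ with the image of $P$ in $\{\pm 1\}^2/\sigma(\mathcal{O}_f^\times)$, and the observation that $\sigma(\mathcal{O}_f^\times)$ is all of $\{\pm 1\}^2$ when $N(\varepsilon_{d_0f^2})=-1$ but only $\{(+,+),(-,-)\}$ when $N(\varepsilon_{d_0f^2})=1$; this correctly uses $\mathcal{O}_f^\times = \{\pm \varepsilon^n\}$, which is the rank-one unit theorem for real quadratic orders and fine to invoke. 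You also rightly isolate the real content as the surjectivity step in the norm-one case, and your construction there checks out: $N(x+\theta) = x^2 + fd_0x + f^2 d_0(d_0-1)/4$ is correct, the coefficient $d_0(d_0-1)/4$ is an integer precisely because $d_0 \equiv 0, 1 \pmod{4}$ (which is what legitimizes the congruence $N(x+\theta) \equiv x^2 \pmod{f}$), the polynomial is negative on an open interval of length $f\sqrt{d_0} > f$, and such an interval contains $f$ consecutive integers, hence an $x$ coprime to $f$, yielding $\alpha$ of negative norm representing the nontrivial coset. One small remark: a principal fractional ideal is automatically invertible, so the condition $\gcd(N(\alpha),f)=1$ is not needed for invertibility per se; it is needed only to conform to the paper's (Cohn's) convention that the ideal group consists of ideals of norm prime to $f$ --- and under that convention you do need it and correctly arrange it. What your approach buys is an elementary, explicit proof valid for every conductor at once; what the paper's citation buys is brevity and consistency with Cohn's framework, in which these facts are developed alongside the correspondence with quadratic form classes used elsewhere in the paper.
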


\begin{proof}  
The case of $f=1$ is given in Theorem 3 of \cite[p. 198]{cohn2}.  The details for $f>1$ are sketched in Chap. XIII.2.
\end{proof}

An immediate consequence is a condition for equality of the relative strict ideal class number and the relative ideal class number.
\begin{corollary}\label{cor}
The relative class numbers for real quadratic fields satisfy
\[
h^+_{d_0}(f) = 2 h_{d_0}(f) 
\]
if and only if $N(\varepsilon_{d_0}) = {-1}$ and $N(\varepsilon_{d_0 f^2}) =1 $.  In all other cases $
h^+_{d_0}(f) =  h_{d_0}(f).$
\end{corollary}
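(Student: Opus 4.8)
The plan is to apply Theorem \ref{2.2} twice---once to the maximal order (the case $f=1$) and once to $\mathcal{O}_f$---and then take the quotient that defines the relative class numbers. Writing $e_0 = 1$ when $N(\varepsilon_{d_0}) = -1$ and $e_0 = 2$ when $N(\varepsilon_{d_0}) = 1$, and likewise $e_f = 1$ or $2$ according to whether $N(\varepsilon_{d_0 f^2})$ is $-1$ or $1$, Theorem \ref{2.2} gives $h_+(d_0) = e_0\, h(d_0)$ and $h_+(d_0 f^2) = e_f\, h(d_0 f^2)$. Dividing,
\[
h^+_{d_0}(f) = \frac{h_+(d_0 f^2)}{h_+(d_0)} = \frac{e_f}{e_0}\cdot\frac{h(d_0 f^2)}{h(d_0)} = \frac{e_f}{e_0}\, h_{d_0}(f).
\]
So everything reduces to evaluating the ratio $e_f/e_0 \in \{1/2,\, 1,\, 2\}$ over the four sign combinations for the two norms.

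The key step---and the only point requiring more than bookkeeping---is to rule out the combination $N(\varepsilon_{d_0}) = 1$ together with $N(\varepsilon_{d_0 f^2}) = -1$, which would otherwise produce the spurious value $e_f/e_0 = 1/2$. For this I would use the fact that the unit group of $\mathcal{O}_f$ sits inside the unit group of the maximal order $\mathcal{O}_1$. Since for a real quadratic field each unit group is $\{\pm 1\}$ times an infinite cyclic group, the fundamental unit $\varepsilon_{d_0 f^2}$ of $\mathcal{O}_f$ (taken greater than $1$) is a positive power of the fundamental unit $\varepsilon_{d_0}$ of $\mathcal{O}_1$, say $\varepsilon_{d_0 f^2} = \varepsilon_{d_0}^{k}$ for some integer $k \geq 1$. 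Taking norms gives $N(\varepsilon_{d_0 f^2}) = N(\varepsilon_{d_0})^{k}$, so if $N(\varepsilon_{d_0}) = 1$ then $N(\varepsilon_{d_0 f^2}) = 1$ as well. Hence the combination $(e_0, e_f) = (2, 1)$ cannot occur.

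With that combination excluded, the remaining three cases are immediate: $e_f/e_0 = 2$ occurs exactly when $e_0 = 1$ and $e_f = 2$, that is, $N(\varepsilon_{d_0}) = -1$ and $N(\varepsilon_{d_0 f^2}) = 1$, yielding $h^+_{d_0}(f) = 2 h_{d_0}(f)$; in the two surviving cases $(e_0, e_f) = (1,1)$ and $(2,2)$ the ratio is $1$, yielding $h^+_{d_0}(f) = h_{d_0}(f)$. This establishes both directions of the stated equivalence. I expect the containment $\mathcal{O}_f^{\times} \subseteq \mathcal{O}_1^{\times}$ and the resulting power relation between the two fundamental units to be the crux; once the impossible sign pattern is eliminated, the corollary follows directly from Theorem \ref{2.2}.
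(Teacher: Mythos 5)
Your proposal is correct and follows essentially the same route as the paper: both apply Theorem \ref{2.2} at $f=1$ and at general $f$, then use the containment $\mathcal{O}_f^{\times} \subseteq \mathcal{O}_1^{\times}$ and the relation $\varepsilon_{d_0 f^2} = \varepsilon_{d_0}^{k}$ (with norms multiplicative) to rule out the combination $N(\varepsilon_{d_0})=1$, $N(\varepsilon_{d_0 f^2})=-1$, after which the remaining sign patterns give the stated dichotomy. Your $e_0$, $e_f$ bookkeeping is just a more explicit organization of the paper's case analysis.
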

\begin{proof}
Certainly we will have equality if $N(\varepsilon_{d_0}) =N(\varepsilon_{d_0 f^2}) $.  Since $\varepsilon_{d_0 f^2}$ is also a unit in $\mathcal{O}_1$, we know that $\varepsilon_{d_0 f^2}=\varepsilon_{d_0}^k$ for some $k$ that depends on $d_0$ and $f$.  This  means that the case $N(\varepsilon_{d_0})=1$ and $N(\varepsilon_{d_0 f^2}) = -1$ never occurs.  The only other option is $
N(\varepsilon_{d_0})=-1$ and $N(\varepsilon_{d_0 f^2}) = 1$ (i.e. the $k$ above is even).  This case gives $h^+_{d_0}(f) = 2 h_{d_0}(f) $ by Theorem \ref{2.2}.
\end{proof}

We will use the following result which gives a formula for the relative class number $h_{d_0}(f)$.  If $d_0$ is a field discriminant, we let $m$ denote its squarefree part, so that
\[
m=\begin{cases}
d_0 & \mbox{ if } \, m \equiv 1 \mod 4, \\
\frac{d_0}{4} & \mbox{ if } \, m \equiv 2,3 \mod 4.
\end{cases}
\]

\begin{theorem}\cite{cohn1}\cite[p. 217]{cohn2}\label{2.3}  Let $d_0$ be a field discriminant, with $m$ its squarefree part, so that $K=\qm$.
Let $\varepsilon_m$ be the fundamental unit of $K$ written as
\[
\varepsilon_m = \frac{x+y \sqrt{m}}{c} \, \mbox{ where } \, c=\begin{cases} 2 &\mbox{  if  } \,\,m \equiv  1 \mod 4\\
1 & \mbox{  if  }\,\, m \equiv 2,3 \mod 4.
\end{cases}
\]
Define
\[  
\psi(f) = f \displaystyle \prod_{q \mid f} \left( 1- \left( \frac{d_0}{q} \right) \frac{1}{q} \right)
 \]
where $\displaystyle \left( \frac{d_0}{q} \right)$ is the Legendre symbol and $q$ is prime. Define $\phi(f)$ to be the 
smallest positive integer such that $(\varepsilon_m)^{\phi(f)}$ is in a non-maximal order $\mathcal{O}_{f} ,$  (namely that $\phi(f)$ is the smallest integer such that $ (\varepsilon_m)^{\phi(f)} = \displaystyle \frac{a+b\sqrt{m}}{c}$ where $b \equiv 0 \mod f$).
Then $$ h_{d_0}(f) = \frac{\psi(f)}{\phi(f)}. $$ 
\end{theorem}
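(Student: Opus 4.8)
The plan is to reduce the computation of $h_{d_0}(f)=h(d_0f^2)/h(d_0)$ to two independent ingredients: a \emph{local} factor counting units modulo $f$, which produces $\psi(f)$, and a \emph{global} unit-index factor, which produces $\phi(f)$. The main tool is the standard exact sequence relating the invertible ideal class group $\mathrm{Cl}(\mathcal{O}_f)$ (of order $h(d_0f^2)$) to the class group $\mathrm{Cl}(\mathcal{O}_1)$ of the maximal order. Concretely, sending an invertible $\mathcal{O}_f$-ideal (one whose norm is prime to $f$) to the $\mathcal{O}_1$-ideal it generates induces a surjection $\mathrm{Cl}(\mathcal{O}_f)\to\mathrm{Cl}(\mathcal{O}_1)$, and one identifies its kernel with a quotient of $(\mathcal{O}_1/f\mathcal{O}_1)^*$. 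Assembling these into the four-term exact sequence
\[
1 \longrightarrow \mathcal{O}_1^*/\mathcal{O}_f^* \longrightarrow (\mathcal{O}_1/f\mathcal{O}_1)^*/(\mathbb{Z}/f\mathbb{Z})^* \longrightarrow \mathrm{Cl}(\mathcal{O}_f) \longrightarrow \mathrm{Cl}(\mathcal{O}_1) \longrightarrow 1
\]
and taking cardinalities gives
\[
\frac{h(d_0 f^2)}{h(d_0)} = \frac{\bigl|(\mathcal{O}_1/f\mathcal{O}_1)^*\bigr|/\bigl|(\mathbb{Z}/f\mathbb{Z})^*\bigr|}{[\mathcal{O}_1^*:\mathcal{O}_f^*]}.
\]
It then remains to identify the numerator with $\psi(f)$ and the denominator with $\phi(f)$.

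For the numerator, I would first note that $f\mapsto \bigl|(\mathcal{O}_1/f\mathcal{O}_1)^*\bigr|/\bigl|(\mathbb{Z}/f\mathbb{Z})^*\bigr|$ is multiplicative by the Chinese Remainder Theorem, so it suffices to treat a prime power $q^k$. Reducing $\mathcal{O}_1$ modulo $q$ and using the splitting type of $q$ in $\qm$ — split, inert, or ramified according to whether $\bigl(\tfrac{d_0}{q}\bigr)$ is $+1$, $-1$, or $0$ — one computes $\bigl|(\mathcal{O}_1/q\mathcal{O}_1)^*\bigr|$ to be $(q-1)^2$, $q^2-1$, or $q(q-1)$ respectively. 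Dividing by $\bigl|(\mathbb{Z}/q\mathbb{Z})^*\bigr|=q-1$ yields $q-\bigl(\tfrac{d_0}{q}\bigr)$ in all three cases, and a parallel computation for $q^k$ shows the ratio is simply $q^{k-1}$ times the ratio for $q$. Multiplying over $q\mid f$ reproduces exactly $\psi(f)=f\prod_{q\mid f}\bigl(1-\bigl(\tfrac{d_0}{q}\bigr)\tfrac1q\bigr)$.

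For the denominator, I would observe that a unit of $\mathcal{O}_f$ is precisely a unit of $\mathcal{O}_1$ that happens to lie in $\mathcal{O}_f$, so $\mathcal{O}_f^*=\mathcal{O}_1^*\cap\mathcal{O}_f$. Since the roots of unity $\{\pm1\}$ lie in both, the index $[\mathcal{O}_1^*:\mathcal{O}_f^*]$ is governed entirely by the free parts: writing every unit as $\pm\varepsilon_m^{\,n}$, the power $\varepsilon_m^{\,n}$ lies in $\mathcal{O}_f$ exactly when $\phi(f)\mid n$, by the very definition of $\phi(f)$ as the least positive exponent with $\varepsilon_m^{\,\phi(f)}\in\mathcal{O}_f$. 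Hence $\mathcal{O}_f^*$ is generated by $-1$ and $\varepsilon_m^{\,\phi(f)}$, giving $[\mathcal{O}_1^*:\mathcal{O}_f^*]=\phi(f)$ and therefore $h_{d_0}(f)=\psi(f)/\phi(f)$.

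The main obstacle is the exact sequence itself — in particular proving exactness in the middle, namely that the kernel of $\mathrm{Cl}(\mathcal{O}_f)\to\mathrm{Cl}(\mathcal{O}_1)$ is precisely the image of $(\mathcal{O}_1/f\mathcal{O}_1)^*/(\mathbb{Z}/f\mathbb{Z})^*$, together with injectivity of the leftmost map. This requires the theory of invertible $\mathcal{O}_f$-ideals prime to $f$ and the bijection between such ideals and their extensions to $\mathcal{O}_1$; once that correspondence is in hand, injectivity on the left reduces to the assertion that a unit of $\mathcal{O}_1$ congruent to a rational integer modulo $f$ already lies in $\mathcal{O}_f$, which follows from the explicit description $\mathcal{O}_f=\mathbb{Z}+f\mathcal{O}_1$ together with $\mathcal{O}_f/f\mathcal{O}_1\cong\mathbb{Z}/f\mathbb{Z}$. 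Since this machinery is developed in Cohn \cite{cohn2}, in the write-up I would cite it for the exact sequence and devote the argument to the two identifications above.
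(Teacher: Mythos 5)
Your proof is correct, but note that there is no in-paper argument to match it against: the paper states Theorem \ref{2.3} as an imported result, citing Cohn \cite{cohn1} and \cite[p. 217]{cohn2}, and supplies no proof of its own. Your exact-sequence derivation is the standard modern route (it is essentially the classical relative class number formula for orders, as in Cohn's cited treatment, where $\psi(f)$ likewise arises from counting invertible residues modulo $f$ and $\phi(f)$ from the unit index $[\mathcal{O}_1^*:\mathcal{O}_f^*]$), so in substance you are reconstructing the computation the paper defers to Cohn for, packaged more cleanly through the four-term sequence. The three identifications all check out: the local count $q-\bigl(\tfrac{d_0}{q}\bigr)$ in the split, inert, and ramified cases together with the $q^{k-1}$ scaling for prime powers reproduces $\psi(f)$, and since every unit of $\mathcal{O}_1$ is $\pm\varepsilon_m^{\,n}$, the unit index is $\phi(f)$. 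Three small points to tighten in a write-up. First, for $q=2$ the symbol $\bigl(\tfrac{d_0}{q}\bigr)$ must be read as the Kronecker symbol for your trichotomy to hold as stated (the theorem's statement has the same looseness in calling it a Legendre symbol). Second, the claim that $\varepsilon_m^{\,n}\in\mathcal{O}_f$ exactly when $\phi(f)\mid n$ is not literally ``by the very definition'': the definition only provides the least positive exponent, and you need the one-line observation that $\{n\in\mathbb{Z} : \varepsilon_m^{\,n}\in\mathcal{O}_f\}$ is a subgroup, closed under negation because $\varepsilon_m^{-1}=\pm\overline{\varepsilon_m}$ and $\mathcal{O}_f$ is stable under conjugation. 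Third, the same conjugation remark is needed to finish your injectivity step, to pass from ``the unit lies in $\mathcal{O}_f$'' (via $\mathcal{O}_f=\mathbb{Z}+f\mathcal{O}_1$) to ``it is a unit \emph{of} $\mathcal{O}_f$''; and since your argument works with membership in $\mathcal{O}_f$ while the theorem defines $\phi(f)$ by the congruence $f\mid b$ in $\varepsilon_m^{\,\phi(f)}=\frac{a+b\sqrt{m}}{c}$, you should record the easy verification that these two conditions agree in both parity cases of $m$.
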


Please note that because we write $\varepsilon_m$ in the form where we divide by $c$, the ``y coordinate" of the fundamental unit will always be an integer, and so it makes sense to ask if $f$ divides $y$.

\section{Results}

A consequence of Theorem \ref{2.3} is the following, which gives a criterion for when the relative ideal class number will be equal to one.

\begin{theorem}\label{3.1}
Let $K=\qm$  be a real quadratic field, with $m$  being the squarefree part of the field discriminant $d_0$ of $K$.  If the fundamental unit $\varepsilon_m=\frac{x+y\sqrt{m}}{c}$ (using the notation of  Theorem \ref{2.3}) has the property that $m$ does not divide $y$, then there is some prime $f>1$ such that $h_{d_0}(f) =1.$
\end{theorem}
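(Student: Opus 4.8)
The plan is to exhibit the required prime explicitly as a prime divisor of $m$, and to evaluate both $\psi$ and $\phi$ at it through Theorem \ref{2.3}. Since $m$ is squarefree, $m\mid y$ would force every prime factor of $m$ to divide $y$; as $m\nmid y$ by hypothesis, I can fix a prime $q\mid m$ with $q\nmid y$ and take $f=q$, which is the candidate value $f>1$. Everything then reduces to proving $h_{d_0}(q)=\psi(q)/\phi(q)=1$, i.e. $\psi(q)=\phi(q)$. The value of $\psi(q)$ is immediate: since $q\mid m$ we have $q\mid d_0$ (because $d_0$ equals $m$ or $4m$), so the Legendre symbol $\left(\frac{d_0}{q}\right)$ vanishes and the product defining $\psi$ collapses to $\psi(q)=q$.

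The substance of the argument is the evaluation of $\phi(q)$, the least $k\geq 1$ with $q\mid b_k$, where $\varepsilon_m^{\,k}=(a_k+b_k\sqrt m)/c$. I would expand the identity $(x+y\sqrt m)^k=c^{\,k-1}(a_k+b_k\sqrt m)$ by the binomial theorem. Because $q\mid m$, every power $(\sqrt m)^j$ with $j\geq 2$ is $\equiv 0\mod{q}$, so on the left only the $j=0$ and $j=1$ terms survive modulo $q$ and the coefficient of $\sqrt m$ is $\equiv k\,x^{k-1}y\mod{q}$. As $c$ is a unit modulo $q$, this coefficient ($=c^{\,k-1}b_k$) is divisible by $q$ exactly when $k\,x^{k-1}y$ is. Two facts now close the computation: $q\nmid y$ by the choice of $q$, and $q\nmid x$ because the norm relation $x^2-my^2=\pm c^2$ gives $x^2\equiv\pm c^2\not\equiv 0\mod{q}$. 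Hence $q\mid b_k$ if and only if $q\mid k$, so $\phi(q)=q=\psi(q)$ and $h_{d_0}(q)=1$.

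The only delicate point, and the one I would treat separately, is that the available prime might be $q=2$: this occurs exactly when $m\equiv 2\mod 4$, $y$ is odd, and every odd prime dividing $m$ divides $y$. In that case I must confirm that $c=1$, that $\left(\frac{d_0}{2}\right)=0$ so $\psi(2)=2$, and that the binomial reduction remains valid modulo $2$; the norm relation again forces $x$ odd, whence the coefficient of $\sqrt m$ is $\equiv k\mod{2}$ and $\phi(2)=2$. Verifying these congruences, and checking $q\nmid x$ uniformly from $N(\varepsilon_m)=\pm 1$, is where the bookkeeping lies, but it presents no genuine obstruction and yields the same conclusion $h_{d_0}(2)=1$.
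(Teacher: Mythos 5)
Your proof is correct, and its skeleton matches the paper's opening: pick a prime $q\mid m$ with $q\nmid y$ (possible since $m$ is squarefree), observe that $q\mid d_0$ makes the symbol vanish so $\psi(q)=q$, and then show $\phi(q)=q$. Where you genuinely diverge is in the evaluation of $\phi(q)$. The paper never computes $b_k$ at all: it invokes the integrality of $h_{d_0}(q)=\psi(q)/\phi(q)$ (a fact going back to Dirichlet, packaged with Theorem \ref{2.3}) to force $\phi(q)\in\{1,q\}$, and then rules out $\phi(q)=1$ simply because $q\nmid y$ means $\varepsilon_m\notin\mathcal{O}_q$ --- two lines in total. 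You instead prove the stronger exact statement $q\mid b_k$ if and only if $q\mid k$, by expanding $(x+y\sqrt m)^k=c^{\,k-1}(a_k+b_k\sqrt m)$ binomially modulo $q$, using $q\mid m$ to kill every term past $j=1$ and the norm relation $x^2-my^2=\pm c^2$ to secure $q\nmid x$; your reduction is sound, including the unit status of $c$ modulo $q$ (for $q=2$, $q\mid m$ squarefree forces $m\equiv 2\pmod 4$ and $c=1$, exactly as you say). The trade-off: your route costs the bookkeeping you describe --- in particular the separate glance at $q=2$, which the paper's uniform integrality argument never needs --- but it buys independence from the integrality of the relative class number (you use only the formula $h_{d_0}(f)=\psi(f)/\phi(f)$) and it determines the full set of exponents $k$ with $\varepsilon_m^k\in\mathcal{O}_q$, not merely that the minimal one among $\{1,q\}$ is $q$. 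One cosmetic slip: you state the trivial direction ``$m\mid y$ forces every prime factor of $m$ to divide $y$,'' whereas what you need is the converse (true for squarefree $m$): if $m\nmid y$ then some prime factor of $m$ misses $y$; since both directions are immediate here, this does not affect correctness.
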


\begin{proof}  If $m$ does not divide $y$ then there is a prime $f$ that divides $m$ but not $y$ (because $m$ is squarefree).  Choose such an $f$.  On the one hand, since $f|m$ and $m|d_0$ we know the Legendre symbol is $0$, and hence $\psi(f) = f$.  On the other hand, since $h_d(f)$ is an integer, $ \phi(f) = 1$ or $f$.  And since we chose $f$ not dividing $y$, we know $(\varepsilon_m)^1 \notin \mathcal{O}_f$.  So $\phi(f)=f$ and $h_{d_0}(f)=1$.  
\end{proof}

In the case of $m=46$ it is impossible to find such an $f$ since the fundamental unit is $\varepsilon_{46}=24335+3588 \sqrt{46}$ and $46  | 3588$.  This is the first fundamental unit with the property that $m$ divides $y$, and so it is a natural candidate to consider more carefully.

We will now prove Theorem \ref{1.1}.

\begin{proof}

For all positive fundamental discriminants $d_0$ with squarefree part $1<m<46$ there is some relative ideal class number $h_{d_0}(f)=1$ by the criterion of Theorem \ref{3.1}.  We now treat the case $m=46$.  

Since $m\equiv 2 \mod{4}$, $d_0=4m = 184$ and $c=1$.  We will start by considering primes $f$ which do not divide $46$.  Because $f$ is prime, $\psi(f) = f-\left(\frac{184}{f}\right) = f-\left(\frac{46}{f}\right)$ is even.

We also know that the fundamental unit is $\varepsilon_{46}=24335+3588 \sqrt{46}$.  This is the smallest solution to the Pell equation $x^2-46 y^2 = 1$.  If we write $\varepsilon_{46}=a_1+b_1 \sqrt{46}$, then  other solutions are of the form $(a_n+b_n \sqrt{46})=(a_1+b_1 \sqrt{46})^n$.  These obey the recurrence relations

\begin{align}\label{e3}
a_{n+1} &= a_1 a_n + 46 b_1 b_n, \\
b_{n+1} &= a_1 b_n + b_1 a_n. \nonumber
\end{align}

We can write this in matrix notation as
\[
\left(  \begin{array}{cc}
a_1 & 46 b_1  \\
b_1 & a_1 
\end{array} \right) 
\left( \begin{array}{c}
a_n \\
b_n
\end{array} \right) = 
\left( \begin{array}{c}
a_{n+1} \\
b_{n+1}
\end{array}\right)
\]
or
\[
\left(  \begin{array}{cc}
a_1 & 46 b_1  \\
b_1 & a_1 
\end{array} \right)^n
\left( \begin{array}{c}
a_1 \\
b_1
\end{array} \right) = 
\left( \begin{array}{c}
a_{n+1} \\
b_{n+1}
\end{array}\right)
\]
or

\begin{equation}\label{e1}
\left(  \begin{array}{cc}
a_1 & 46 b_1  \\
b_1 & a_1 
\end{array} \right)^{n-k}
\left( \begin{array}{c}
a_k \\
b_k
\end{array} \right) = 
\left( \begin{array}{c}
a_{n} \\
b_{n}
\end{array}\right).
\end{equation}

It will be helpful to recognize that all powers of this matrix have determinant $1$ (since $a_1^2-46 b_1^2 = 1)$ and have a nice ``almost diagonal" form:
\[
\left(  \begin{array}{cc}
a_1 & 46 b_1  \\
b_1 & a_1 
\end{array} \right)^t=
\left(  \begin{array}{cc}
A & 46 B \\
B & A
\end{array} \right)
\]
for some integers $A,B$.

We start by examining $(\varepsilon_{46})^{\psi(f)}$.  We know $\phi(f)$ -  the minimum exponent such that $(\varepsilon_{46})^{\psi(f)} \in \mathcal{O}_f$ - must divide $\psi(f)$ because $h_d(f) \in \mathbb{Z}$.  Therefore, $(\varepsilon_{46})^{\psi(f)} \in \mathcal{O}_f$.  Said differently, for

\[
\left(  \begin{array}{cc}
a_1 & 46 b_1  \\
b_1 & a_1 
\end{array} \right)^{\psi(f)-1}
\left( \begin{array}{c}
a_1 \\
b_1
\end{array} \right) = 
\left( \begin{array}{c}
a_{\psi(f)} \\
b_{\psi(f)}
\end{array}\right),
\]
 we know $f|b_{\psi(f)}$.  Then our goal will be to show $f|b_n$ for some $n<\psi(f)$.  

We look $\mod{f}$.  Menezes in \cite[p. 59]{menezes} discusses this in a slightly different way.  He considers $\emph{C}$ to be the set of all solutions $(x,y) \in F_q \times F_q$ to the Pell equation $x^2-Dy^2=1$ where $q$ is a power of an odd prime.  He defines a group structure by 

\begin{equation}\label{e2}
(x_1, y_2) \oplus (x_2,y_2) = (x_1 x_2+Dy_1y_2 \,, \, x_1y_2+x_2 y_1).
\end{equation}
Then his Lemma 4.4 states that $(C, \oplus)$ is an abelian group with identity $(1,0)$ and his Theorem 4.5 states that $(C, \oplus)$ is a cyclic group of order $q-\left(\frac{D}{q}\right),$ (which is our $\psi(f)$ when $D=46$ and $q=f$).

Notice that the group structure given by  (\ref{e2}) is exactly the structure given by relations (\ref{e3}) and the subsequent matrix multiplications.  Therefore, 
\[
\left(  \begin{array}{cc}
a_1 & 46 b_1  \\
b_1 & a_1 
\end{array} \right)^{\psi(f)} = \left(  \begin{array}{cc}
1 & 0  \\
0 & 1
\end{array} \right) \mod{f}.
\]

It turns out that  $\left(  \begin{smallmatrix}
a_1 & 46 b_1  \\
b_1 & a_1 
\end{smallmatrix} \right)^{\psi(f)/2} \mod{f}$ is diagonal.  To see this, we write

\[\left(  \begin{array}{cc}
a_1 & 46 b_1  \\
b_1 & a_1 
\end{array} \right)^{\psi(f)/2} = \left(  \begin{array}{cc}
A & 46 B  \\
B & A
\end{array} \right). 
\]
 Then

\[
\left(\begin{array}{cc}
A & 46 B  \\
B & A 
\end{array} \right)
\left(\begin{array}{cc}
A & 46 B  \\
B & A 
\end{array} \right) = 
\left(\begin{array}{cc}
A^2+46  B^2 & 2\cdot 46 A B  \\
2 A B & A^2+46  B^2
\end{array} \right)=
\left(\begin{array}{cc}
1 & 0  \\
0 & 1
\end{array} \right) \mod{f}.
\]
Since $f$ was chosen not to divide $46$, either $f$ divides $A$ or $B$.  If $f| A$, then $46 B^2 \equiv 1 \mod{f}$. But at the same time, this matrix should have determinant $A^2-46 B^2 = 1$, which means $-46B^2\equiv 1 \mod{f}$, which gives a contradiction.   Therefore $f|B$ and we see that 
\[
\left(  \begin{array}{cc}
a_1 & 46 b_1  \\
b_1 & a_1 
\end{array} \right)^{\psi(f)/2} = \left(  \begin{array}{cc}
A & 0  \\
0 & A
\end{array} \right) \mod{f}.
\]

Setting $n=\psi(f)$ and $k=\psi(f)/2$ in equation (\ref{e1}) above, we obtain the equation

\[
\left(  \begin{array}{cc}
a_1 & 46 b_1  \\
b_1 & a_1 
\end{array} \right)^{\psi(f)/2}
\left( \begin{array}{c}
a_{\psi(f)/2}\\
b_{\psi(f)/2}
\end{array} \right) = 
\left( \begin{array}{c}
a_{\psi(f)} \\
b_{\psi(f)}
\end{array}\right).
\]
Multiplying on the left by $\left(  \begin{smallmatrix}
a_1 & 46 b_1  \\
b_1 & a_1 
\end{smallmatrix} \right)^{\psi(f)/2}$ gives the following equations:

\[
\left(  \begin{array}{cc}
a_1 & 46 b_1  \\
b_1 & a_1 
\end{array} \right)^{\psi(f)}
\left( \begin{array}{c}
a_{\psi(f)/2}\\
b_{\psi(f)/2}
\end{array} \right) = 
\left(  \begin{array}{cc}
a_1 & 46 b_1  \\
b_1 & a_1 
\end{array} \right)^{\psi(f)/2}
\left( \begin{array}{c}
a_{\psi(f)} \\
b_{\psi(f)}
\end{array}\right).
\]

\[
\left(  \begin{array}{cc}
1 & 0 \\
0 & 1
\end{array} \right)
\left( \begin{array}{c}
a_{\psi(f)/2}\\
b_{\psi(f)/2}
\end{array} \right) = 
\left(\begin{array}{c}
a_{\psi(f)/2} \\
b_{\psi(f)/2}
\end{array}\right)=
\left(  \begin{array}{cc}
A & 0 \\
0 & A
\end{array} \right)
\left( \begin{array}{c}
a_{\psi(f)} \\
b_{\psi(f)}
\end{array}\right)  \mod{f}.
\]

Therefore $b_{\psi(f)/2} \equiv A b_{\psi(f)} \mod{f}$, and since we already know that $f| b_{\psi(f)}$ we know that $(\varepsilon_{46})^{\psi(f)/2} \in \mathcal{O}_f$.  Therefore the relative ideal class number satisfies $h_{d_0}(f) \geq 2$, answering the question of Cohn.

A computation shows that when $m=46$ $h_{d_0}(23)=23$ and $h_{d_0}(2)=2$, because in both these cases, $\varepsilon_{46} \in \mathcal{O}_f$.

Therefore, for all primes $f$, $h_{d_0}(f) > 1$.  And since  $f | g$ implies $h_{d_0}(f)|h_{d_0}(g)$, this proves that $h_{d_0}(f)>1$ for all $f>1$.

To show the connection with the other relative class numbers we need only notice that $N(\varepsilon_{184})=1$.  Then by the analysis in Corollary \ref{cor}, we see that for all $f>1$ we have $N(\varepsilon_{184 f^2})=1$ and  so
\[
h^+_{d_0}(f) = \frac{h_+(d_0 f^2)}{h_+(d_0)} = \frac{2 h(d_0 f^2)}{2 h(d_0)} = h_{d_0}(f),
\]
as desired.  Since $H_{d_0}(f) = h^+_{d_0}(f)$, by Theorem \ref{2.1} we see that all three relative class numbers, $H_{d_0}(f)$, $h^+_{d_0}(f),$ and $h_{d_0}(f)$, are all equal and are all $>1$ for all $f >1$.

\end{proof}

One might ask how many other quadratic fields $\qm$ satisfy the property that every non-maximal order gives a relative ideal class number $>1.$   By Theorem \ref{3.1}, it  suffices to only consider cases where $m$ divides the $y$ coordinate of $\varepsilon_{m}$.  Quadratic fields with this property were studied in \cite{stephens} while researching powerful numbers.  They tested all $\qm$ with $m<10^7$ and found only $8$  fields such that $m$ divides $y$.  They are $m=46$, $430$, $1817$, $58254$, $209991$, $1752299$, $3124318$  and $4099215$.  Hence for all other quadratic fields $\qm$ with $m$ squarefree and  $<10^7$ one can find a prime $f$ that divides $m$ but not $y$, and so by Theorem \ref{3.1} we have $h_{d_0}(f)=1$.  While our proof of Theorem 1.1 is specific to $m=46$, the arguments can be adapted to show that the above seven other quadratic fields where $m$ divides the $y$ coordinate of $\varepsilon_{m}$ will never have a (non-maximal) relative ideal class number equal to $1$.  

There appear to be few general results for when $m$ divides (or does not divide) $y$.  Ankeny, Artin and Chowla in \cite{aac}  asked if $p$ is a prime with $p \equiv 1 \mod 4$ and if $(x+y\sqrt{p})/2$ is the fundamental unit of $\mathbb{Q}(\sqrt{p})$, then does $p \nmid y$?  Mordell in \cite{mordell}  conjectured the same is true for $p\equiv 3 \mod 4$.  Neither of these have been proven, though both have been checked for large primes \cite{stephens} \cite{poorten}.  It seems to be an open problem whether there exist infinitely many squarefree $m$ such that the fundamental unit $\varepsilon_m = \frac{x+y\sqrt{m}}{c}$ of $\qm$ given as in Theorem \ref{2.3} has $m|y$.

\subsection{Acknowledgements}
The authors would like to thank the 2009 REU (Research Experience for Undergraduates) at Mount Holyoke College, in particular Giuliana Davidoff, for bringing relative class numbers to our attention.  We would also like to thank Jeffrey Lagarias for his comments that greatly improved this paper.  Finally we want to sincerely thank the generous anonymous referee who spent much energy on this paper, in particular explaining the connection between the quadratic form class numbers and ideal class numbers.

\end{document}